\newtheorem{theorem}{Theorem}[section]
\newtheorem{lemma}{Lemma}[section]
\newtheorem{remark}{Remark}[section]
\newtheorem{corollary}{Corollary}[section]
\newtheorem{proposition}{Proposition}[section]
\numberwithin{equation}{section}
\let\Re\relax
\DeclareMathOperator{\Re}{Re}
\let\Im\relax
\DeclareMathOperator{\Im}{Im}
\begin{document}
	
\title{Operator inequalities via the triangle inequality}
\author{Shigeru Furuichi, Mohammad Sababheh, and Hamid Reza Moradi}
\subjclass[2010]{Primary 47A63, Secondary 46L05, 47A60}
\keywords{Triangle inequality; numerical radius; convex function}
\maketitle

\begin{abstract}
This article improves the triangle inequality for complex numbers, using the Hermite-Hadamard inequality for convex functions. Then, applications of the obtained refinement are presented to include some operator inequalities. The operator applications include numerical radius inequalities and operator mean inequalities.
\end{abstract}
\pagestyle{myheadings}
\markboth{\centerline {}}
{\centerline {}}
\bigskip
\bigskip
\section{Main Results}
In the field of Mathematical inequalities, interest in refining existing inequalities or sharpening them has been at the center of researchers' attention; see \cite{FM2020} for example.

One of the most powerful tools in obtaining new inequalities or sharpening existing ones is the use of convex functions. Recall that a function $f:J\to\mathbb{R}$ is said to be convex on the interval $J$ if it satisfies the basic inequality
\begin{align}\label{ineq_intro_basic}
f((1-t)a+tb)\leq (1-t)a+tb,
\end{align}
for all $a,b\in J$ and $0\leq t\leq 1.$ This inequality was refined  by the form
\begin{align}\label{ineq_intro_ref}
f((1-t)a+tb)+2r_t\left(\frac{f(a)+f(b)}{2}-f\left(\frac{a+b}{2}\right)\right)\leq (1-t)f(a)+tf(b),
\end{align}
where $r_t=\min\{t,1-t\}.$ Applications of this inequality can be found in \cite{mitroi,sab_mia,sab_mjom}.

One of the most useful inequalities in convex analysis is the so called Hermite-Hadamard inequality, which states
\begin{align}\label{ineq_intro_HH}
f\left(\frac{a+b}{2}\right)\leq \frac{1}{b-a}\int_{a}^{b}f(t)dt\leq \frac{f(a)+f(b)}{2};
\end{align}
as a refinement of \eqref{ineq_intro_basic} when $t=\frac{1}{2}.$

In this paper we employ the Inequality \eqref{ineq_intro_HH} to refine the well known triangle inequality 
$$|c+d|\leq |c|+|d|, c,d\in \mathbb{C}.$$
More precisely, we show that 
\begin{equation}\label{intro_ineq_ref_tri}
	\left|c+d \right|\le 2\int\limits_{0}^{1}{\left| sc+(1-s)d \right|ds}\le \left| c \right|+\left| d \right|.
	\end{equation}
This will enable us to present a refinement of the Cauchy-Schwartz inequality for the inner product. Then applications that include refined forms for some numerical radius inequalities and operator mean inequalities will be given.

For this, we need to recall some notions related to Hilbert space operators. Let $\mathcal{H}$ be a complex Hilbert space with inner product $\left<\cdot,\cdot\right>:\mathcal{H}\times\mathcal{H}\to\mathbb{C}$ and let $\mathcal{B}(\mathcal{H})$ be the $C^*$-algebra of all bounded linear operators on $\mathcal{H}$. For $A\in\mathcal{B}(\mathcal{H})$, the operator norm and the numerical radius are defined respectively as
$$\|A\|=\sup_{\|x\|=1}\|Ax\|\;{\and}\;w(A)=\sup_{\|x\|=1}\left|\left<Ax,x\right>\right|.$$
It is well known that
\begin{align*}
\frac{1}{2}\|A\|\leq w(A)\leq \|A\|.
\end{align*}
Refining these inequalities has occupied an adequate area of research in this field. For example, in \cite{1} Kittaneh showed that
\begin{align}\label{intro_ineq_w_half}
w(A)\leq \frac{1}{2}\left\|\;|A|+|A^*|\;\right\|,
\end{align}
where $A^*$ is adjoint operator of $A$. The fact that this refines the inequality $w(A)\leq \|A\|$ is due to the triangle inequality and the observation $\|\;|A^*|\;\|=\|\;|A|\;\|.$ 

Using \eqref{intro_ineq_ref_tri} we will be able to present a refined form of \eqref{intro_ineq_w_half}, where we find a scalar $\alpha$ such that $\frac{1}{2}\leq\alpha\leq 1$ and
$$w(A)\leq\frac{\alpha}{2}\left\|\;|A|+|A^*|\;\right\|,$$  
for a certain class of operators. We should remark that finding better bounds for the numerical radius has received a renowned interest in the last few years, as one can see in \cite{bakh_sheb,bhu,bou,gau,haj,Hirzallah,1,Kittaneh 2,mosl,Sababheh}.

Using the same approach, we will be able to find an inequality that relates the geometric mean of $|A|^{2v}$ and $|A|^{2(1-v)}$ with the numerical radius. For this, we recall that the weighted geometric mean of the two positive definite operators $A,B$ is \cite{ando_hiai} $$A\sharp_t B=A^{\frac{1}{2}}\left(A^{-\frac{1}{2}}BA^{-\frac{1}{2}}\right)^tA^{\frac{1}{2}}; 0\leq t\leq 1.$$ When $t=\frac{1}{2},$ we write $\sharp$ instead of $\sharp_{\frac{1}{2}}.$

Our results will make use of the angle $\angle_{x,y}$ between two vectors $x,y\in \mathbb{C}^n$ or $x,y\in\mathcal{H}.$ For such $x,y$ the Cauchy-Schwartz inequality states that $|\left<x,y\right>|\leq \|x\|\;\|y\|.$ From this, the angle between $x,y$ can be defined by
$$\angle_{x,y}=\cos^{-1}\left(\frac{|\left<x,y\right>|}{\|x\|\;\|y\|}\right).$$
It is implicitly understood that $0\leq \angle_{x,y}\leq\pi.$

\section{Main results}
In this section, we present our main results. We begin by showing the scalar inequalities, leading to operator inequalities.
\subsection{Scalar inequalities}
Our main result in this section is  the following refinement of the triangle inequality.
\begin{theorem}\label{1}
Let $c$ and $d$ be two complex numbers. Then
	\begin{equation*}
	\left| \frac{c+d}{2} \right|\le \int\limits_{0}^{1}{\left| sc+(1-s)d \right|ds}\le \dfrac{\left| c \right|+\left| d \right|}{2}.
	\end{equation*}
\end{theorem}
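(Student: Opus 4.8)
The plan is to recognize the middle quantity as the integral mean of a convex function on $[0,1]$ and then to invoke the Hermite--Hadamard inequality \eqref{ineq_intro_HH} essentially verbatim.

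First I would fix $c,d\in\mathbb{C}$ and introduce the function $g:[0,1]\to\mathbb{R}$ defined by $g(s)=\left|sc+(1-s)d\right|$. Since $s\mapsto sc+(1-s)d$ is affine and the modulus is a norm on $\mathbb{C}\cong\mathbb{R}^2$, the composition $g$ is convex on $[0,1]$; concretely, for $s_1,s_2,t\in[0,1]$,
$$g\big((1-t)s_1+ts_2\big)=\Big|(1-t)\big(s_1c+(1-s_1)d\big)+t\big(s_2c+(1-s_2)d\big)\Big|\le (1-t)g(s_1)+tg(s_2)$$
by the ordinary triangle inequality, which is exactly the defining property \eqref{ineq_intro_basic}. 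Next I would apply \eqref{ineq_intro_HH} to $g$ on the interval $[a,b]=[0,1]$, obtaining
$$g\!\left(\tfrac12\right)\le \frac{1}{1-0}\int_0^1 g(s)\,ds\le \frac{g(0)+g(1)}{2}.$$
It then only remains to read off the boundary and midpoint values $g(0)=|d|$, $g(1)=|c|$, and $g\!\left(\tfrac12\right)=\left|\tfrac{c+d}{2}\right|$, and substituting these into the last display yields the asserted chain of inequalities.

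There is no genuine obstacle here; the only point requiring a word of justification is that $g$ is truly convex on $[0,1]$ so that Hermite--Hadamard is applicable, and that reduces, as above, to the scalar triangle inequality. For completeness one could also give a direct argument avoiding \eqref{ineq_intro_HH}: the right-hand inequality follows by integrating the pointwise bound $|sc+(1-s)d|\le s|c|+(1-s)|d|$ over $s\in[0,1]$, while the left-hand inequality follows from the identity $\int_0^1\big(sc+(1-s)d\big)\,ds=\tfrac{c+d}{2}$ together with the integral form of the triangle inequality $\big|\int_0^1 h(s)\,ds\big|\le\int_0^1|h(s)|\,ds$. I would nonetheless present the Hermite--Hadamard route as the primary proof, since it is the mechanism exploited throughout the rest of the paper.
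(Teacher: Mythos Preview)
Your proof is correct and follows the same Hermite--Hadamard strategy as the paper. The paper takes a slight detour---proving convexity of $t\mapsto|a+tb|$ via a case split and a second-derivative computation, applying \eqref{ineq_intro_HH} on a general interval $[y,x]$, and then substituting $c=a+xb$, $d=a+yb$ and changing variables back to $[0,1]$---whereas you work directly with $g(s)=|sc+(1-s)d|$ on $[0,1]$ and verify convexity via the triangle inequality; this is cleaner but not a genuinely different route.
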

\begin{proof}
Let $a,b\in\mathbb{C}$ and define the function $f:\mathbb{R}\to[0,\infty)$ by $f(t)=|a+tb|.$ We will show that $f$ is convex on $\mathbb{R}$. For this, assume first that $\frac{a}{b}=\alpha\in\mathbb{R}$. Then $f(t)=|a|\;|1+\alpha t|$, which is convex; being the absolute value function. Now, if $\frac{a}{b}\not\in\mathbb{R}$, then $f$ is twice differentiable on $\mathbb{R}$ and
$$
f''(t)=\frac{\left(\Re(a)  \cdot \Im(b)-\Im (a) \cdot \Re(b)\right)^2}{\left\{\left(\mathcal \Re(a)+t \cdot\Re(b)\right)^2+\left(\Im(a)+t \cdot\Im(b)\right)^2\right\}^{3/2}}\ge 0.
$$
 Thus, $f$ is convex on $\mathbb{R}$. By the Hermite-Hadamard inequality \eqref{ineq_intro_HH}, we have
	\[\left| a+\left( \frac{x+y}{2} \right)b \right|\le \frac{1}{x-y}\int\limits_{y}^{x}{\left| a+tb \right|dt}\le \frac{\left| a+xb \right|+\left| a+yb \right|}{2},\]
for $x>y$. Putting $c=a+xb$ and $d=a+yb$, we  have $a=\dfrac{dx-cy}{x-y}$ and $b=\dfrac{c-d}{x-y}$. This implies
	$$
	\left| \frac{c+d}{2} \right|\le \frac{1}{ x-y }\int\limits_{y}^{x}{\left| \frac{dx-cy+(c-d)t}{x-y} \right|dt}\le \frac{\left| c \right|+\left| d \right|}{2}.
	$$
Now, if we set $\dfrac{dx-cy+(c-d)t}{x-y}=sc+(1-s)d$, then we have $\dfrac{1}{x-y}dt=ds$. So for complex numbers $c$ and $d$,
	\begin{equation*}
	\left| \frac{c+d}{2} \right|\le \int\limits_{0}^{1}{\left| sc+(1-s)d \right|ds}\le \dfrac{\left| c \right|+\left| d \right|}{2},
	\end{equation*}
as desired.
\end{proof}

As a direct consequence of Theorem \ref{1}, we can improve the celebrated Cauchy-Schwartz inequality.

\begin{corollary}\label{2}
Let $x,y\in \mathcal H$. Then
	\[\left| \left\langle x,y \right\rangle  \right|\le \int\limits_{0}^{1}{\left| t{{e}^{i\theta }}+\left( 1-t \right){{e}^{-i\theta }} \right|dt}\left\| x \right\|\left\| y \right\|\le \left\| x \right\|\left\| y \right\|,\]
where $\theta ={{\angle }_{x,y}}$.
\end{corollary}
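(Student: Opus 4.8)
The plan is to deduce this directly from Theorem \ref{1} by a judicious choice of the two complex numbers $c$ and $d$, together with the definition of the angle $\angle_{x,y}$. First I would dispose of the trivial case $x=0$ or $y=0$, where every term vanishes; so assume $x,y\neq 0$. Writing $\theta=\angle_{x,y}$, the very definition of the angle gives $\cos\theta=\dfrac{|\langle x,y\rangle|}{\|x\|\,\|y\|}$, and since $\dfrac{|\langle x,y\rangle|}{\|x\|\,\|y\|}\in[0,1]$ by the Cauchy--Schwartz inequality, we have $\theta\in[0,\pi/2]$, hence $\cos\theta\ge 0$.

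The key step is to apply Theorem \ref{1} with $c=e^{i\theta}$ and $d=e^{-i\theta}$. Then $\dfrac{c+d}{2}=\cos\theta$, so $\left|\dfrac{c+d}{2}\right|=\cos\theta$ (using $\cos\theta\ge0$), while $|c|=|d|=1$ gives $\dfrac{|c|+|d|}{2}=1$. Theorem \ref{1} therefore yields
\[
\cos\theta\;\le\;\int_{0}^{1}\left|\,t e^{i\theta}+(1-t)e^{-i\theta}\,\right|dt\;\le\;1 .
\]
Multiplying this chain of inequalities by the nonnegative quantity $\|x\|\,\|y\|$ and recalling $|\langle x,y\rangle|=\|x\|\,\|y\|\cos\theta$ gives exactly the asserted
\[
\left|\langle x,y\rangle\right|\le \int_{0}^{1}\left|\,t e^{i\theta}+(1-t)e^{-i\theta}\,\right|dt\,\|x\|\,\|y\|\le \|x\|\,\|y\| .
\]

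There is essentially no serious obstacle here: the whole argument is a substitution into Theorem \ref{1}. The only point that requires a moment's care is making sure the outer absolute value collapses correctly, i.e.\ that $\left|\tfrac{c+d}{2}\right|$ really equals $\cos\theta$ rather than $|\cos\theta|$; this is guaranteed because the argument of $\cos^{-1}$ in the definition of $\angle_{x,y}$ lies in $[0,1]$, forcing $\theta\le\pi/2$. One could also remark that the middle term is strictly less than $1$ unless $e^{i\theta}=e^{-i\theta}$ (i.e.\ $\theta\in\{0,\pi\}$, so here $\theta=0$), which pinpoints when the refinement is genuinely strict, but that is an optional addendum rather than part of the proof.
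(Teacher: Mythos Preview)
Your proof is correct and follows essentially the same route as the paper: apply Theorem~\ref{1} with $c=e^{i\theta}$ and $d=e^{-i\theta}$, then use $|\langle x,y\rangle|=\|x\|\,\|y\|\cos\theta$ from the definition of $\angle_{x,y}$. Your treatment is in fact slightly more careful than the paper's, since you explicitly handle the trivial case $x=0$ or $y=0$ and justify why $\cos\theta\ge 0$ (whereas the paper simply writes $|\cos\theta|$).
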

\begin{proof}
We have
\begin{equation}\label{6}
\left| \left\langle x,y \right\rangle  \right|=\left| \cos \theta  \right|\left\| x \right\|\left\| y \right\|.
\end{equation}
Since 
	\[\cos \theta =\operatorname{\mathcal R}\left( {{e}^{i\theta }} \right)=\frac{{{e}^{i\theta }}+{{e}^{-i\theta }}}{2},\]
Theorem \ref{1} applied for $c=e^{i\theta}$ and $d=e^{-i\theta}$ implies that
	\[\left| \left\langle x,y \right\rangle  \right|\le \int\limits_{0}^{1}{\left| t{{e}^{i\theta }}+\left( 1-t \right){{e}^{-i\theta }} \right|dt}\left\| x \right\|\left\| y \right\|\le \left\| x \right\|\left\| y \right\|,\]
	as desired.
\end{proof}

\begin{remark}
The inequalities in Corollary \ref{2} mean
$$
\vert \cos \theta \vert \le \int\limits_{0}^{1}{\left| t{{e}^{i\theta }}+\left( 1-t \right){{e}^{-i\theta }} \right|dt} \le 1.
$$
\end{remark}

To calculate the constant that appears in Corollary \ref{2}, notice that for an arbitrary $\theta\in\mathbb{R}$,
\[\begin{aligned}
   \left| t{{e}^{i\theta }}+\left( 1-t \right){{e}^{-i\theta }} \right|&=\left| t\left( \cos \theta +i\sin \theta  \right)+\left( 1-t \right)\left( \cos \theta -i\sin \theta  \right) \right| \\ 
 & =\left| \cos \theta +i\left( 2t-1 \right)\sin \theta  \right| \\ 
 & =\sqrt{{{\cos }^{2}}\theta +{{\left( 2t-1 \right)}^{2}}{{\sin }^{2}}\theta }.  
\end{aligned}\]

For the case $\sin\theta =0$, we have $\int\limits_{0}^1\sqrt{\cos^2\theta}dt =|\cos\theta|$.
If we assume $\sin\theta \neq 0$, then
\[\begin{aligned}
 \int_{0}^{1}{\sqrt{{{\cos }^{2}}\theta +{{\left( 2t-1 \right)}^{2}}{{\sin }^{2}}\theta }dt}
&=\frac{|\sin \theta|}{2}\int_{-1}^1\sqrt{s^2+\cot^2\theta}ds\\
&=\frac{|\sin\theta|}{4}\left[s\sqrt{s^2+\cot^2\theta}+\cot^2\theta \log \left| s+\sqrt{s^2+\cot^2\theta} \right| \right]_{-1}^1\\
&=\frac{1}{2}+\frac{1}{4}|\sin\theta|\cot^2\theta \log \left| \frac{1+|\sin\theta|}{1-|\sin\theta|} \right|=:\mu (\theta).
\end{aligned}\]
Since $\sin\theta > 0$ for $n\pi < \theta < (n+1)\pi$ and $\sin\theta < 0$ for $(n+1)< \pi \theta < (n+2)\pi$, where $n=0,1,2,\cdots$, 
\begin{eqnarray*}
&&\mu(\theta)=\frac{1}{2}+\frac{1}{4}\sin\theta \cot\theta \log \left| \frac{1+\sin\theta}{1-\sin\theta} \right|,\quad n\pi < \theta < (n+1)\pi,\\
&&\mu(\theta)=\frac{1}{2}-\frac{1}{4}\sin\theta \cot\theta \log \left| \frac{1-\sin\theta}{1+\sin\theta} \right|,\quad (n+1)\pi < \theta < (n+2)\pi.
\end{eqnarray*}
Thus we have
$$\mu(\theta)=\frac{1}{4}\left(2+\cos \theta \cot \theta \log \frac{1+\sin \theta}{1-\sin \theta}\right),\quad \theta \neq n\pi,\quad \text{where} \quad n=0,1,2,\cdots,
$$
since $1\pm\sin\theta > 0$ for all $\theta\neq n\pi$, where $n=0,1,2,\cdots$. Since we have $\mu(\theta)\to 1$ when $\theta \to n\pi$ and $|\cos\theta |=1$ for $\theta = n \pi$, where $n=0,1,2,\cdots$, we have
\begin{equation*}
\mu(\theta)=\frac{1}{4}\left(2+\cos \theta \cot \theta \log \frac{1+\sin \theta}{1-\sin \theta}\right),\quad \theta \ge 0.
\end{equation*}

We study the properties of the function $\mu(\theta)$ for $\theta \ge 0$.
It is sufficient to consider $0\le \theta <  \pi$, since $\mu(\theta+\pi)=\mu(\theta)$.
For this purpose, we prepare the following lemma.
\begin{lemma}\label{lemma1.1}
If $0\le x<1$, then 
\begin{equation}\label{lemma1.1_eq01}
\dfrac{2x}{x^2+1}\le \log\dfrac{1+x}{1-x}.
\end{equation}
If $-1< x \le 0$, then  the reversed inequality holds.
\end{lemma}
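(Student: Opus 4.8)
The plan is to reduce the two‑sided statement to a single monotonicity argument. I would set
\[
g(x)=\log\frac{1+x}{1-x}-\frac{2x}{x^2+1},\qquad -1<x<1,
\]
and first record two elementary facts: $g(0)=0$, and $g$ is odd, since replacing $x$ by $-x$ changes the sign of both $\log\frac{1+x}{1-x}$ and $\frac{2x}{x^2+1}$. In particular it suffices to prove $g(x)\ge 0$ on $[0,1)$, because the reversed inequality on $(-1,0]$ is then immediate from $g(-x)=-g(x)$.

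Next I would differentiate. Using $\dfrac{d}{dx}\log\dfrac{1+x}{1-x}=\dfrac{1}{1+x}+\dfrac{1}{1-x}=\dfrac{2}{1-x^2}$ and $\dfrac{d}{dx}\dfrac{2x}{x^2+1}=\dfrac{2(1-x^2)}{(1+x^2)^2}$, one obtains
\[
g'(x)=\frac{2}{1-x^2}-\frac{2(1-x^2)}{(1+x^2)^2}
=\frac{2\bigl[(1+x^2)^2-(1-x^2)^2\bigr]}{(1-x^2)(1+x^2)^2}
=\frac{8x^2}{(1-x^2)(1+x^2)^2},
\]
where the last equality uses the difference of squares $(1+x^2)^2-(1-x^2)^2=4x^2$. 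The right‑hand side is nonnegative on the whole interval $(-1,1)$, so $g$ is non‑decreasing there.

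Combining these observations finishes the proof: on $[0,1)$ we get $g(x)\ge g(0)=0$, which is exactly \eqref{lemma1.1_eq01}, and on $(-1,0]$ we get $g(x)\le g(0)=0$, which is the reversed inequality. (Invoking oddness is in fact optional, since $g'\ge 0$ on all of $(-1,1)$ together with $g(0)=0$ already yields both claims.) There is no genuine obstacle here; the only step deserving a moment's care is the simplification of $g'$, where spotting the factorization $(1+x^2)^2-(1-x^2)^2=4x^2$ is what makes the sign of $g'$ transparent.
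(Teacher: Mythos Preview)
Your argument is correct. Defining $g(x)=\log\dfrac{1+x}{1-x}-\dfrac{2x}{x^2+1}$, computing
\[
g'(x)=\frac{8x^2}{(1-x^2)(1+x^2)^2}\ge 0\qquad(-1<x<1),
\]
and combining this with $g(0)=0$ gives both inequalities at once; the oddness observation is a nice touch but, as you note, not strictly needed.

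The paper takes a genuinely different route. It substitutes $t=\dfrac{1+x}{1-x}$ (so $0\le x<1$ corresponds to $t\ge 1$) and rewrites the inequality as $\dfrac{t^2-1}{t^2+1}\le \log t$; setting $a=t^2$ this becomes $\dfrac{a-1}{\log a}\le \dfrac{a+1}{2}$, i.e.\ the classical inequality that the logarithmic mean is dominated by the arithmetic mean. The reversed case is then obtained by the reciprocal substitution $s=1/t$. Compared with your approach, the paper's proof is shorter \emph{provided} one is willing to quote the log--arithmetic mean inequality as known, and it has the conceptual payoff of identifying the lemma as an avatar of a standard mean inequality. Your proof, by contrast, is entirely self-contained and arguably more transparent: one derivative, one clean factorization, no external input. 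Either approach is perfectly adequate here.
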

\begin{proof}
We firstly prove the first statement.
Putting $t:=\dfrac{1+x}{1-x}$ for $0\le x<1$, the inequality \eqref{lemma1.1_eq01} is equivalent to the inequality $\dfrac{t^2-1}{t^2+1}\le \log t$ for $t\ge 1$. Letting $a:=t^2\ge 1$ and using the inequality $\dfrac{a-1}{\log a}\le \dfrac{a+1}{2}$ for $a>0$, we get the desired inequality. This proves the first inequality. 

To prove the reversed inequality for $-1<x\leq 0,$ we set $s:=\dfrac{1}{t}$. Then we have
$\dfrac{s^2-1}{s^2+1}\ge \log s$ for $0<s\le 1$. This inequality is equivalent to the reversed inequality of \eqref{lemma1.1_eq01} for $-1<x\le 0$ setting $s:=\dfrac{1+x}{1-x}$.
\end{proof}
Now we present the monotonicity of the function $\mu(\theta).$ As we mentioned earlier, this function has period $\pi,$ so we study it only on the interval $[0,\pi].$
\begin{proposition}\label{prop1.1}
The function $\mu(\theta)$ is decreasing on the interval $\left[0,\frac{\pi}{2}\right]$ and is increasing in $\left[\frac{\pi}{2},\pi\right].$ 
\end{proposition}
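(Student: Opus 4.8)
The plan is to collapse the whole statement into the single-variable estimate already recorded in Lemma \ref{lemma1.1}. The first step is to rewrite $\mu$ as a function of $\sin\theta$ alone. Since $\cos\theta\cot\theta=\dfrac{\cos^2\theta}{\sin\theta}=\dfrac{1-\sin^2\theta}{\sin\theta}$ for every $\theta\in(0,\pi)$, we get
\[
\mu(\theta)=\frac14\Bigl(2+h(\sin\theta)\Bigr),\qquad h(x):=\frac{1-x^2}{x}\log\frac{1+x}{1-x},\quad x\in(0,1).
\]
In particular $\mu$ depends on $\theta$ only through $\sin\theta$, so $\mu(\pi-\theta)=\mu(\theta)$; it therefore suffices to understand the sign of $\mu'$, and the behaviour on $[\tfrac\pi2,\pi]$ will mirror that on $[0,\tfrac\pi2]$ (in fact the sign computation below handles both intervals at once).

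Next I would differentiate $h$. Writing $L(x)=\log\frac{1+x}{1-x}$, so that $L'(x)=\dfrac{2}{1-x^2}$, a short computation (using $h(x)=(\tfrac1x-x)L(x)$) gives
\[
h'(x)=-\frac{1+x^2}{x^2}\,L(x)+\frac2x=\frac{1}{x^2}\Bigl(2x-(1+x^2)\log\tfrac{1+x}{1-x}\Bigr),\qquad 0<x<1 .
\]
By Lemma \ref{lemma1.1}, $\dfrac{2x}{1+x^2}\le\log\dfrac{1+x}{1-x}$ for $0\le x<1$, i.e.\ $2x\le(1+x^2)\log\frac{1+x}{1-x}$; hence $h'(x)\le 0$ on $(0,1)$ and $h$ is decreasing there.

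Finally, by the chain rule $\mu'(\theta)=\tfrac14\cos\theta\,h'(\sin\theta)$ for $\theta\in(0,\pi)\setminus\{\tfrac\pi2\}$. On $(0,\tfrac\pi2)$ we have $\cos\theta>0$ and $\sin\theta\in(0,1)$, so $h'(\sin\theta)\le 0$ and thus $\mu'(\theta)\le 0$; on $(\tfrac\pi2,\pi)$ we have $\cos\theta<0$, so $\mu'(\theta)\ge 0$. Since $\mu$ extends continuously to the endpoints $\theta=0,\tfrac\pi2,\pi$ (with the values $\mu(0)=\mu(\pi)=1$ and $\mu(\tfrac\pi2)=\tfrac12$ already computed), the monotonicity passes to the closed intervals $[0,\tfrac\pi2]$ and $[\tfrac\pi2,\pi]$, which is the claim. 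The only genuine content here is the inequality $h'\le 0$, and that is precisely Lemma \ref{lemma1.1}; the remaining work is the algebraic simplification $\cos\theta\cot\theta=(1-\sin^2\theta)/\sin\theta$ that exhibits $\mu$ as a function of $\sin\theta$, and a little care at the three removable singularities $\theta=0,\tfrac\pi2,\pi$.
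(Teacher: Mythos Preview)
Your proof is correct and essentially the same as the paper's. The paper computes $\mu'(\theta)=\dfrac{\cos\theta}{8\sin^2\theta}\,\nu(\theta)$ with $\nu(\theta)=4\sin\theta-2(\sin^2\theta+1)\log\dfrac{1+\sin\theta}{1-\sin\theta}$ and then invokes Lemma~\ref{lemma1.1} to get $\nu\le 0$; your factorisation $\mu'(\theta)=\tfrac14\cos\theta\,h'(\sin\theta)$ with $h'(x)=x^{-2}\bigl(2x-(1+x^2)\log\tfrac{1+x}{1-x}\bigr)$ is literally the same expression, arrived at by first writing $\mu$ as a composition with $\sin\theta$ and then applying the chain rule rather than differentiating directly.
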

\begin{proof}
By elementary calculations, we have
$$
\mu'(\theta)=\frac{\cos\theta}{8\sin^2\theta}\nu(\theta),\quad where\;\nu(\theta):=4\sin\theta-2(\sin^2\theta +1)\log\frac{1+\sin\theta}{1-\sin\theta}.
$$

Since we have $\lim\limits_{\theta\to n\pi/2}\mu'(\theta)=0$ for $n=0,1,2$, we consider the values $\theta \neq n\pi/2$, where $n=0,1,2$.
Putting $x:=\sin\theta$ for $0< \theta < \pi$, we consider the function $\hat{\nu}(x)=4x-2(x^2+1)\log\dfrac{1+x}{1-x}$  for $0<x<1$. We have $\hat{\nu}(x)<0$ by Lemma \ref{lemma1.1}. Therefore we have $\nu(\theta) \le 0$ for $0\le \theta \le \pi$.

Taking account that $\cos\theta$ is positive when $0<\theta <\dfrac{\pi}{2}$ and is negative when $\dfrac{\pi}{2}<\theta <\pi$, we have
$\mu'(\theta) \le 0$ when $0\le \theta \le \dfrac{\pi}{2}$ and $\mu'(\theta) \ge 0$ when $\dfrac{\pi}{2}\le \theta \le \pi$. This completes the proof.
\end{proof}
\begin{corollary}
The  inequality $\dfrac{1}{2}\le \mu(\theta)\le 1$ holds for $\theta\geq 0.$
\end{corollary}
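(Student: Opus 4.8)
The plan is to combine the monotonicity established in Proposition \ref{prop1.1} with the periodicity $\mu(\theta+\pi)=\mu(\theta)$ and a couple of boundary evaluations of $\mu$. Since $\mu$ has period $\pi$, it suffices to prove the two-sided bound on the interval $[0,\pi]$; the bound for all $\theta\ge 0$ then follows at once.

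First I would record the relevant values. By the discussion preceding Proposition \ref{prop1.1}, $\mu$ extends to a continuous function on $[0,\pi]$ with $\mu(0)=\mu(\pi)=1$ (these are precisely the limits $\mu(\theta)\to 1$ as $\theta\to n\pi$ noted there), while from the closed form
\[
\mu(\theta)=\frac{1}{4}\left(2+\cos\theta\cot\theta\log\frac{1+\sin\theta}{1-\sin\theta}\right)
\]
one gets immediately $\mu\!\left(\tfrac{\pi}{2}\right)=\tfrac14(2+0)=\tfrac12$, since $\cos\tfrac{\pi}{2}=0$.

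Then, invoking Proposition \ref{prop1.1}, $\mu$ is decreasing on $\left[0,\tfrac{\pi}{2}\right]$ and increasing on $\left[\tfrac{\pi}{2},\pi\right]$. Consequently, on $[0,\pi]$ the function attains its minimum at $\theta=\tfrac{\pi}{2}$, so $\mu(\theta)\ge\mu\!\left(\tfrac{\pi}{2}\right)=\tfrac12$, and it attains its maximum at one of the endpoints $\theta=0$, $\theta=\pi$, so $\mu(\theta)\le\max\{\mu(0),\mu(\pi)\}=1$. Together with periodicity this gives $\tfrac12\le\mu(\theta)\le 1$ for every $\theta\ge 0$.

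The argument is essentially immediate once Proposition \ref{prop1.1} is available; the only point deserving care is that the closed form for $\mu$ is a priori valid only for $\theta\neq n\pi$, so one must use the continuous extension (equivalently, the limit relations already established) in order to evaluate the maximum at the endpoints. I note that the upper bound $\mu(\theta)\le 1$ could alternatively be read directly from Corollary \ref{2} applied to $c=e^{i\theta}$ and $d=e^{-i\theta}$, so the genuinely new content here is the lower bound $\tfrac12\le\mu(\theta)$, which is exactly what the minimum computation at $\theta=\tfrac{\pi}{2}$ delivers.
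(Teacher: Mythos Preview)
Your argument is correct and mirrors the paper's own proof: reduce to $[0,\pi]$ by periodicity, record the values of $\mu$ at $0$, $\tfrac{\pi}{2}$, $\pi$, and invoke the monotonicity from Proposition~\ref{prop1.1}. One small caveat: at $\theta=\tfrac{\pi}{2}$ the closed form $\tfrac14\bigl(2+\cos\theta\cot\theta\log\frac{1+\sin\theta}{1-\sin\theta}\bigr)$ is an indeterminate $0\cdot\infty$ (the logarithm diverges since $\sin\tfrac{\pi}{2}=1$), so $\mu(\tfrac{\pi}{2})=\tfrac12$ should be obtained as a limit (as the paper does) or directly from the integral definition, not by plugging in $\cos\tfrac{\pi}{2}=0$.
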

\begin{proof}
It suffices to consider the values $0\leq \theta\leq \pi.$ We have $\lim\limits_{\theta\to 0}\mu(\theta)=\lim\limits_{\theta\to \pi}\mu(\theta)=1$ and $\lim\limits_{\theta\to \pi/2}\mu(\theta)=\dfrac{1}{2}$. By  Proposition \ref{prop1.1}, we infer that $\dfrac{1}{2}\le \mu(\theta)\le 1$.
\end{proof}

\subsection{Hilbert space Operator inequalities}
In the following theorem, we improve the mixed Schwarz inequality, which states that \cite{kato}
\[\left| \left\langle Ax,y \right\rangle  \right|\le \sqrt{\left\langle {{\left| A \right|}^{2v}}x,x \right\rangle \left\langle {{\left| {{A}^{*}} \right|}^{2\left( 1-v \right)}}y,y \right\rangle },x,y\in\mathcal{H}, 0\leq v\leq 1.\]
This inequality has been used extensively in the literature when dealing with numerical radius inequalities, see \cite{sab_viet} for example.
\begin{theorem}\label{3}
Let $A\in \mathcal B\left( \mathcal H \right)$ with the the polar decomposition $A=U\left| A \right|$ and let $x,y\in \mathcal H$. Then for any $0\le v\le 1$,
\[\left| \left\langle Ax,y \right\rangle  \right|\le \mu \left( \theta  \right)\sqrt{\left\langle {{\left| A \right|}^{2v}}x,x \right\rangle \left\langle {{\left| {{A}^{*}} \right|}^{2\left( 1-v \right)}}y,y \right\rangle },\]
where $\theta ={{\angle }_{{{\left| A \right|}^{v}}x,{{\left| A \right|}^{1-v}}{{U}^{*}}y}}$.
\end{theorem}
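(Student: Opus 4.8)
\emph{Proof proposal.} The plan is to reduce the assertion to Corollary \ref{2} by the same factorization that underlies the mixed Schwarz inequality, and then to identify the two norms that appear with the quantities on the right-hand side.

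First I would use the polar decomposition $A=U|A|$ together with the splitting $|A|=|A|^{1-v}|A|^{v}$ and the self-adjointness of $|A|^{1-v}$ to write
\begin{equation*}
\langle Ax,y\rangle=\langle U|A|x,y\rangle=\langle |A|x,U^{*}y\rangle=\langle |A|^{v}x,\,|A|^{1-v}U^{*}y\rangle .
\end{equation*}
Setting $u:=|A|^{v}x$ and $w:=|A|^{1-v}U^{*}y$, this reads $\langle Ax,y\rangle=\langle u,w\rangle$. Applying Corollary \ref{2} to the pair $u,w$, and then using the evaluation of the constant carried out right after Corollary \ref{2}, gives
\begin{equation*}
|\langle Ax,y\rangle|=|\langle u,w\rangle|\le\Big(\int_{0}^{1}\big|te^{i\theta}+(1-t)e^{-i\theta}\big|\,dt\Big)\|u\|\,\|w\|=\mu(\theta)\,\|u\|\,\|w\|,
\end{equation*}
where $\theta=\angle_{u,w}=\angle_{|A|^{v}x,\,|A|^{1-v}U^{*}y}$, exactly the angle named in the statement.

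It remains to compute the two norms. Clearly $\|u\|^{2}=\langle |A|^{v}x,|A|^{v}x\rangle=\langle |A|^{2v}x,x\rangle$. For $w$,
\begin{equation*}
\|w\|^{2}=\langle |A|^{1-v}U^{*}y,\,|A|^{1-v}U^{*}y\rangle=\langle U|A|^{2(1-v)}U^{*}y,\,y\rangle .
\end{equation*}
The key algebraic point is the identity $U|A|^{2(1-v)}U^{*}=|A^{*}|^{2(1-v)}$. This is a standard consequence of the polar decomposition: since $AA^{*}=U|A|^{2}U^{*}=|A^{*}|^{2}$ and $U$ intertwines the continuous functional calculi of $|A|$ and $|A^{*}|$ away from $0$ (equivalently $U|A|^{t}=|A^{*}|^{t}U$ for $t>0$, with $UU^{*}$ acting as the identity on $\overline{\operatorname{ran}|A^{*}|}$), one has $Uf(|A|)U^{*}=f(|A^{*}|)$ for every continuous $f$ with $f(0)=0$; apply this with $f(s)=s^{2(1-v)}$. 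Hence $\|w\|^{2}=\langle |A^{*}|^{2(1-v)}y,y\rangle$, and substituting the two norm formulas into the displayed bound yields precisely the claimed inequality.

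I expect the only genuinely delicate point to be this last identity $U|A|^{2(1-v)}U^{*}=|A^{*}|^{2(1-v)}$, i.e. the fact that the partial isometry in the polar decomposition transports the functional calculus of $|A|$ to that of $|A^{*}|$; everything else is a direct substitution into Corollary \ref{2}. One should also dispose separately of the degenerate cases $u=0$ or $w=0$ (that is, $|A|^{v}x=0$ or $|A|^{1-v}U^{*}y=0$), and of the endpoints $v=0,1$: in each of these situations $\langle Ax,y\rangle=0$ or the inequality collapses to ordinary Cauchy–Schwarz, so the estimate holds trivially with $\theta$ defined arbitrarily. $\hfill\square$
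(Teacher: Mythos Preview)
Your proposal is correct and follows essentially the same route as the paper: rewrite $\langle Ax,y\rangle=\langle |A|^{v}x,\,|A|^{1-v}U^{*}y\rangle$, apply Corollary~\ref{2} with the integral constant identified as $\mu(\theta)$, and then use $U|A|^{2(1-v)}U^{*}=|A^{*}|^{2(1-v)}$ to rewrite the second norm. The paper's proof is precisely this chain of equalities and one inequality, presented without your additional commentary on the functional-calculus identity or the degenerate cases.
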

\begin{proof}
According to the assumptions and by employing Corollary \ref{2}, we have
\[\begin{aligned}
   \left| \left\langle Ax,y \right\rangle  \right|&=\left| \left\langle U\left| A \right|x,y \right\rangle  \right| \\ 
 & =\left| \left\langle U{{\left| A \right|}^{1-v}}{{\left| A \right|}^{v}}x,y \right\rangle  \right| \\ 
 & =\left| \left\langle {{\left| A \right|}^{v}}x,{{\left| A \right|}^{1-v}}{{U}^{*}}y \right\rangle  \right| \\ 
 & \le \mu \left( \theta  \right)\left\| {{\left| A \right|}^{v}}x \right\|\left\| {{\left| A \right|}^{1-v}}{{U}^{*}}y \right\| \\ 
 & =\mu \left( \theta  \right)\sqrt{\left\langle {{\left| A \right|}^{2v}}x,x \right\rangle \left\langle U{{\left| A \right|}^{2\left( 1-v \right)}}{{U}^{*}}y,y \right\rangle } \\ 
 & =\mu \left( \theta  \right)\sqrt{\left\langle {{\left| A \right|}^{2v}}x,x \right\rangle \left\langle {{\left| {{A}^{*}} \right|}^{2\left( 1-v \right)}}y,y \right\rangle }  
\end{aligned}\]
as desired.
\end{proof}

The following is a straightforward consequence from Proposition \ref{prop1.1}.
\begin{corollary}\label{4} The following holds.
\begin{itemize}
\item[(i)] If $0\le {{\theta }_{1}}<\theta <{{\theta }_{2}}\le \frac{\pi }{2}$, then 
$$\mu \left( \theta  \right)\le \mu \left( {{\theta }_{1}} \right).$$
\item[(ii)] If $\frac{\pi }{2}\le {{\theta }_{1}}<\theta <{{\theta }_{2}}\le \pi $, then
$$\mu \left( \theta  \right)\le \mu \left( {{\theta }_{2}} \right).$$
\end{itemize}

\end{corollary}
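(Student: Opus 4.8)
\textbf{Proof proposal for Corollary \ref{4}.}

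The plan is to read off both inequalities directly from the monotonicity established in Proposition \ref{prop1.1}, since $\mu$ is decreasing on $\left[0,\frac{\pi}{2}\right]$ and increasing on $\left[\frac{\pi}{2},\pi\right]$. The role of the outer endpoints $\theta_1$ and $\theta_2$ in each part is merely to confine the running variable $\theta$ to the correct monotonicity interval; only one of the two comparisons is actually used in each case.

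For part (i), I would observe that the hypothesis $0\le\theta_1<\theta<\theta_2\le\frac{\pi}{2}$ forces both $\theta_1$ and $\theta$ to lie in $\left[0,\frac{\pi}{2}\right]$ with $\theta_1<\theta$. Applying the decreasing part of Proposition \ref{prop1.1} on $\left[0,\frac{\pi}{2}\right]$ then yields $\mu(\theta)\le\mu(\theta_1)$. For part (ii), the hypothesis $\frac{\pi}{2}\le\theta_1<\theta<\theta_2\le\pi$ places both $\theta$ and $\theta_2$ in $\left[\frac{\pi}{2},\pi\right]$ with $\theta<\theta_2$, and the increasing part of Proposition \ref{prop1.1} on $\left[\frac{\pi}{2},\pi\right]$ gives $\mu(\theta)\le\mu(\theta_2)$. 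Since Proposition \ref{prop1.1} is stated on the closed intervals and $\mu$ is continuous (indeed $C^1$ away from multiples of $\frac{\pi}{2}$, with the one-sided limits of $\mu'$ vanishing there), no special treatment of the endpoints $0$, $\frac{\pi}{2}$, or $\pi$ is needed.

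There is essentially no obstacle here: the statement is a pure bookkeeping consequence of the shape of $\mu$. If anything, the only point worth a sentence is to make explicit that a strict inequality $\theta_1<\theta$ (resp. $\theta<\theta_2$) together with a weakly monotone $\mu$ still only guarantees the weak bound $\mu(\theta)\le\mu(\theta_1)$ (resp. $\mu(\theta)\le\mu(\theta_2)$), which is exactly what is claimed, so nothing sharper is being asserted or required.
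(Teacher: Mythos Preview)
Your proposal is correct and matches the paper's approach exactly: the paper states that Corollary~\ref{4} is ``a straightforward consequence from Proposition~\ref{prop1.1}'' without giving any further details, and your argument simply spells out that consequence by invoking the decreasing/increasing behavior of $\mu$ on $[0,\pi/2]$ and $[\pi/2,\pi]$, respectively.
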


Now we are ready to present a new bound for the numerical radius. This form refines \eqref{intro_ineq_w_half}, when $v=\frac{1}{2}$ for a certain class of operators.
\begin{corollary}\label{5}
Let $A\in \mathcal B\left( \mathcal H \right)$ have the polar decomposition $A=U\left| A \right|$, $0\le v\le 1$ and let $\theta_x={{\angle }_{{{\left| A \right|}^{v}}x,{{\left| A \right|}^{1-v}}{{U}^{*}}x}}$ where $x\in \mathcal H$ with $\left\| x \right\|=1$. If 
\begin{itemize}
\item[(i)] If $0\le {{\theta }_{1}}<\theta_x <{{\theta }_{2}}\le \frac{\pi }{2}$ for all unit vectors $x\in\mathcal{H}$, then 
\[\omega \left( A \right)\le \frac{\mu \left( {{\theta }_{1}} \right)}{2}\left\| {{\left| A \right|}^{2v}}+{{\left| {{A}^{*}} \right|}^{2\left( 1-v \right)}} \right\|.\]
\item[(ii)] If $\frac{\pi }{2}\le {{\theta }_{1}}<\theta <{{\theta }_{2}}\le \pi $ for all unit vectors $x\in\mathcal{H}$, then
\[\omega \left( A \right)\le \frac{\mu \left( {{\theta }_{2}} \right)}{2}\left\| {{\left| A \right|}^{2v}}+{{\left| {{A}^{*}} \right|}^{2\left( 1-v \right)}} \right\|.\]
\end{itemize}
\end{corollary}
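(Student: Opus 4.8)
The plan is to specialize Theorem~\ref{3} to the case $y=x$ with $x$ a unit vector, to replace the pointwise factor $\mu(\theta_x)$ by the uniform constant supplied by the monotonicity of $\mu$ recorded in Proposition~\ref{prop1.1} (equivalently Corollary~\ref{4}), and then to convert the resulting inner-product estimate into an operator-norm estimate by the scalar arithmetic--geometric mean inequality followed by a supremum over unit vectors.

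Concretely, fix $x\in\mathcal H$ with $\|x\|=1$. Applying Theorem~\ref{3} with $y=x$ gives
\[
\left|\left\langle Ax,x\right\rangle\right|\le \mu(\theta_x)\sqrt{\left\langle |A|^{2v}x,x\right\rangle\left\langle |A^*|^{2(1-v)}x,x\right\rangle},\qquad \theta_x=\angle_{|A|^v x,\,|A|^{1-v}U^* x}.
\]
In case (i), the hypothesis $0\le\theta_1<\theta_x<\theta_2\le\frac{\pi}{2}$ together with the fact that $\mu$ is decreasing on $\left[0,\frac{\pi}{2}\right]$ yields $\mu(\theta_x)\le\mu(\theta_1)$; in case (ii), the hypothesis $\frac{\pi}{2}\le\theta_1<\theta_x<\theta_2\le\pi$ together with the fact that $\mu$ is increasing on $\left[\frac{\pi}{2},\pi\right]$ yields $\mu(\theta_x)\le\mu(\theta_2)$. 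Write $\mu_0$ for this constant ($\mu(\theta_1)$ in case (i), $\mu(\theta_2)$ in case (ii)); it is independent of $x$.

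Next, since $|A|^{2v}$ and $|A^*|^{2(1-v)}$ are positive operators, the numbers $\alpha:=\left\langle |A|^{2v}x,x\right\rangle$ and $\beta:=\left\langle |A^*|^{2(1-v)}x,x\right\rangle$ are nonnegative, so $\sqrt{\alpha\beta}\le\frac{\alpha+\beta}{2}$. Hence
\[
\left|\left\langle Ax,x\right\rangle\right|\le \frac{\mu_0}{2}\left\langle\left(|A|^{2v}+|A^*|^{2(1-v)}\right)x,x\right\rangle\le\frac{\mu_0}{2}\left\|\,|A|^{2v}+|A^*|^{2(1-v)}\,\right\|,
\]
where the last step uses that the supremum of $\left\langle Tx,x\right\rangle$ over unit vectors $x$ equals $\|T\|$ for a positive operator $T$. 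Taking the supremum over all unit vectors $x$ on the left-hand side produces $\omega(A)$ and gives the asserted inequality in each of the two cases. (When $v=\frac12$ one has $|A|^{2v}+|A^*|^{2(1-v)}=|A|+|A^*|$ and $\mu_0\le1$, so this is exactly the promised refinement of~\eqref{intro_ineq_w_half}.)

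I do not expect a genuine obstacle: the argument is a short chain built from Theorem~\ref{3}, the monotonicity of $\mu$, and the arithmetic--geometric mean inequality. The only point deserving care is that the replacement $\mu(\theta_x)\le\mu_0$ must hold simultaneously for every unit vector $x$ before the supremum is taken, which is precisely why the hypotheses are phrased as two-sided bounds $\theta_1<\theta_x<\theta_2$ valid for all unit $x$; the ``outer'' endpoint ($\theta_2$ in case (i), $\theta_1$ in case (ii)) serves only to keep $\theta_x$ inside an interval on which $\mu$ is monotone, and neither endpoint needs to be attained.
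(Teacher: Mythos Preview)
Your proof is correct and follows essentially the same route as the paper: apply Theorem~\ref{3} with $y=x$, replace $\mu(\theta_x)$ by the appropriate endpoint value using the monotonicity of $\mu$ from Proposition~\ref{prop1.1}/Corollary~\ref{4}, apply the arithmetic--geometric mean inequality, bound by the operator norm, and take the supremum over unit vectors. The only cosmetic difference is that you introduce the notation $\mu_0$ to treat both cases simultaneously, whereas the paper writes out case~(i) explicitly and leaves case~(ii) as analogous.
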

\begin{proof}
We prove the first inequality. Let $x\in \mathcal H$ be a unit vector. By Theorem \ref{3},
\[\begin{aligned}
   \left| \left\langle Ax,x \right\rangle  \right|&\le \mu \left( \theta_x  \right)\sqrt{\left\langle {{\left| A \right|}^{2v}}x,x \right\rangle \left\langle {{\left| {{A}^{*}} \right|}^{2\left( 1-v \right)}}x,x \right\rangle } \\ 
 & \le \mu \left( {{\theta }_{1}} \right)\sqrt{\left\langle {{\left| A \right|}^{2v}}x,x \right\rangle \left\langle {{\left| {{A}^{*}} \right|}^{2\left( 1-v \right)}}x,x \right\rangle } \\ 
 & \le \mu \left( {{\theta }_{1}} \right)\left( \frac{\left\langle {{\left| A \right|}^{2v}}x,x \right\rangle +\left\langle {{\left| {{A}^{*}} \right|}^{2\left( 1-v \right)}}x,x \right\rangle }{2} \right) \\ 
 & =\frac{\mu \left( {{\theta }_{1}} \right)}{2}\left\langle \left( {{\left| A \right|}^{2v}}+{{\left| {{A}^{*}} \right|}^{2\left( 1-v \right)}} \right)x,x \right\rangle  \\ 
 & \le \frac{\mu \left( {{\theta }_{1}} \right)}{2}\left\| {{\left| A \right|}^{2v}}+{{\left| {{A}^{*}} \right|}^{2\left( 1-v \right)}} \right\|, 
\end{aligned}\]
where the second inequality is obtained from Corollary \ref{4}, and the third inequality follows from the arithmetic-geometric mean inequality. Therefore,
\[\left| \left\langle Ax,x \right\rangle  \right|\le \frac{\mu \left( {{\theta }_{1}} \right)}{2}\left\| {{\left| A \right|}^{2v}}+{{\left| {{A}^{*}} \right|}^{2\left( 1-v \right)}} \right\|.\]
Now, we get the desired result by taking the supremum over all unit vector $x\in \mathcal H$.
\end{proof}

\begin{remark}
Put $v={1}/{2}\;$. If $\theta ={\pi }/{2}\;$, then
\[\omega \left( A \right)=\frac{1}{2}\left\| A \right\|,\]
since
\[\frac{1}{2}\left\| A \right\|\le \omega \left( A \right)\le \frac{1}{4}\left\| \left| A \right|+\left| {{A}^{*}} \right| \right\|\le \frac{1}{2}\left\| A \right\|.\]
\end{remark}

In the following, we present a reverse of the triangle inequality. 
\begin{theorem}\label{7}
Let $c$ and $d$ be two complex numbers. Then for any $0\le t \le 1$,
\[\frac{\left| c \right|+\left| d \right|}{2}-\frac{1}{2r_t }\left( \left( 1-t  \right)\left| c \right|+t \left| d \right|-\left| \left( 1-t  \right)c+t d \right| \right)\le \left| \frac{c+d}{2} \right|,\]
where $r_t =\min \left\{ t ,1-t  \right\}$.
\end{theorem}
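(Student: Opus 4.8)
The plan is to derive this as a direct consequence of the refinement \eqref{ineq_intro_ref} of the defining convexity inequality, applied to a convex function built from $c$ and $d$. Recall that in the proof of Theorem \ref{1} it was shown that for fixed $a,b\in\mathbb{C}$ the function $s\mapsto|a+sb|$ is convex on $\mathbb{R}$. Taking $a=c$ and $b=d-c$, this tells us that
\[
h(s):=\left|(1-s)c+sd\right|,\qquad s\in[0,1],
\]
is convex, with the values $h(0)=|c|$, $h(1)=|d|$, $h\!\left(\tfrac12\right)=\left|\tfrac{c+d}{2}\right|$ and $h(t)=\left|(1-t)c+td\right|$.

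First I would apply \eqref{ineq_intro_ref} to the convex function $h$ at the two points $0$ and $1$ with weight $t$. Since $(1-t)\cdot 0+t\cdot 1=t$ and $\tfrac{0+1}{2}=\tfrac12$, this yields
\[
h(t)+2r_t\left(\frac{h(0)+h(1)}{2}-h\!\left(\tfrac12\right)\right)\le (1-t)h(0)+t\,h(1),
\]
which, after substituting the values above, becomes
\[
\left|(1-t)c+td\right|+2r_t\left(\frac{|c|+|d|}{2}-\left|\frac{c+d}{2}\right|\right)\le (1-t)|c|+t|d|.
\]

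Then I would simply rearrange: isolate the term $2r_t\left(\frac{|c|+|d|}{2}-\left|\frac{c+d}{2}\right|\right)$, divide through by $2r_t$, and move $\left|\frac{c+d}{2}\right|$ to the right-hand side while moving the quotient to the left. This produces exactly
\[
\frac{|c|+|d|}{2}-\frac{1}{2r_t}\left((1-t)|c|+t|d|-\left|(1-t)c+td\right|\right)\le\left|\frac{c+d}{2}\right|.
\]

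There is no genuine obstacle here beyond selecting the right convex function and keeping the bookkeeping straight; the only point calling for caution is the division by $2r_t$, which needs $r_t>0$, i.e. $0<t<1$. For $t\in\{0,1\}$ both $r_t$ and the bracketed quantity $(1-t)|c|+t|d|-|(1-t)c+td|$ vanish, so the statement is to be understood for $0<t<1$ (or, at the endpoints, in the obvious limiting sense). If one wished to avoid citing \eqref{ineq_intro_ref} as a black box, an alternative would be to establish it directly for this particular $h$ via the standard slopes-of-chords monotonicity argument for convex functions, but invoking the already-recorded refinement is cleaner.
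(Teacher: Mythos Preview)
Your proof is correct and follows essentially the same approach as the paper: both apply the refined convexity inequality \eqref{ineq_intro_ref} to the convex function $s\mapsto|a+sb|$ established in Theorem~\ref{1}. The only cosmetic difference is that the paper applies it at two generic points $x,y$ and then performs the substitution $c=a+xb$, $d=a+yb$ (mirroring the proof of Theorem~\ref{1}), whereas you choose $a=c$, $b=d-c$ and points $0,1$ from the outset, which avoids that extra change of variables; your remark about the need for $r_t>0$ is also a valid caveat that the paper leaves implicit.
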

\begin{proof}
We know that if $f$ is a convex function on  $ \mathbb{R}$, then for $x,y\in\mathbb{R}$ and any $0\le t \le 1,$
\[f\left( \left( 1-t  \right)x+t y \right)\le \left( 1-t  \right)f\left( x \right)+t f\left( y \right)-2r_t \left( \frac{f\left( x \right)+f\left( y \right)}{2}-f\left( \frac{x+y}{2} \right) \right),\]
where $r_t =\min \left\{ t ,1-t  \right\}$, see \eqref{ineq_intro_ref}. Since for $a,b\in \mathbb{C}$, $f\left( t \right)=\left| a+tb \right|$ is convex  on $\mathbb{R}$, we get 
\[\left| a+\left( \left( 1-t  \right)x+t y \right)b \right|\le \left( 1-t  \right)\left| a+xb \right|+t \left| a+yb \right|-2r_t \left( \frac{\left| a+xb \right|+\left| a+yb \right|}{2}-\left| a+\frac{x+y}{2}b \right| \right).\]
Now, applying the same method as in Theorem \ref{1}, we infer 
\[\left| \left( 1-t  \right)c+t d \right|\le \left( 1-t  \right)\left| c \right|+t \left| d \right|-2r_t \left( \frac{\left| c \right|+\left| d \right|}{2}-\left| \frac{c+d}{2} \right| \right),\] 
as desired
\end{proof}

\begin{remark}
Let $x,y\in \mathcal H$, and let  $r_t=\min \left\{ t ,1-t  \right\}$ with $0\le t \le 1$. By Theorem \ref{7},
\[\begin{aligned}
   0&\le 1-\frac{1}{2r_t}\left( 1-\left| t{{e}^{i\theta }}+\left( 1-t \right){{e}^{-i\theta }} \right| \right)\\ 
 & \le \left| \frac{{{e}^{i\theta }}+{{e}^{-i\theta }}}{2} \right|\\
 &=\left| \cos \theta  \right|, 
\end{aligned}\]
where $\theta ={{\angle }_{x,y}}$. Thus we have
\begin{equation}\label{9}
0\le \gamma_t(\theta) \left\| x \right\|\left\| y \right\|\le \left| \left\langle x,y \right\rangle  \right|,
\end{equation}
where
$$
\gamma_t(\theta):=1-\frac{1}{2r_t}\left( 1-\left| t{{e}^{i\theta }}+\left( 1-t \right){{e}^{-i\theta }} \right| \right)=1-\frac{1}{2r_t}\left( 1-\sqrt{{{\cos }^{2}}\theta +\left( 2t-1 \right)^2{{\sin }^{2}}\theta } \right)
$$
for $\theta \in [0,\pi]$ and $r_t=\min \left\{ t ,1-t  \right\}$ with $0\le t \le 1$. This provides a reverse of the Cauchy-Schwarz inequality.

We notice that $\gamma_{t}(\theta)=\gamma_{1-t}(\theta)$ for $0\leq t\leq 1.$ Also, since we have the Cauchy-Schwartz inequality $|\left<x,y\right>|\leq \|x\|\;\|y\|$, we can use \eqref{9} to obtain  sufficient conditions on the equality $|\left<x,y\right>|= \|x\|\;\|y\|$, as follows.
We notice that $\gamma_{t}(\theta)=1$ when $\theta=0,\pi.$ Thus, when $\angle_{x,y}=0,\pi$, we have $|\left<x,y\right>|= \|x\|\;\|y\|$.\\
Further when $\theta=\frac{\pi}{2}$, $\gamma_{t}(\theta)=0.$

Also, we notice that $\max_{t}\gamma_t(\theta)=\cos\theta,$ which is evident because $\left<x,y\right>=\|x\|\;\|y\|\cos\theta.$

\end{remark}

Again, since $\gamma_t(\theta)$ is periodic in $\theta$ with period $\pi,$ it is sufficient to study it on the interval $[0,\pi].$
\begin{proposition}\label{8}
If $0\le \theta \le \dfrac{\pi}{2}$, then ${{\gamma }_{t}}\left( \theta  \right)$ is decreasing. If $\dfrac{\pi}{2}\le \theta \le \pi$, then ${{\gamma }_{t}}\left( \theta  \right)$ is increasing.
In addition, we have $0\le \gamma_t(\theta)\le 1$.
\end{proposition}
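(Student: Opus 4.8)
The plan is to reduce the monotonicity question to that of a single elementary function, thereby circumventing the piecewise definition of $r_t$. Put
$$g(\theta):=\sqrt{\cos^2\theta+(2t-1)^2\sin^2\theta},$$
so that $\gamma_t(\theta)=1-\frac{1}{2r_t}+\frac{1}{2r_t}\,g(\theta)$. For fixed $t\in(0,1)$ the number $\frac{1}{2r_t}$ is a positive constant, hence $\gamma_t$ has exactly the same monotonicity behaviour as $g$ on any interval; it therefore suffices to prove that $g$ is decreasing on $\left[0,\frac{\pi}{2}\right]$ and increasing on $\left[\frac{\pi}{2},\pi\right]$.

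First I would rewrite $g(\theta)^2=\cos^2\theta+(2t-1)^2\sin^2\theta=1-\bigl(1-(2t-1)^2\bigr)\sin^2\theta=1-4t(1-t)\sin^2\theta$. Since $0\le t\le1$ forces $4t(1-t)\ge0$, the map $u\mapsto 1-4t(1-t)u$ is nonincreasing, while $\theta\mapsto\sin^2\theta$ is increasing on $\left[0,\frac{\pi}{2}\right]$ and decreasing on $\left[\frac{\pi}{2},\pi\right]$. Composing, $g^2$ is decreasing on $\left[0,\frac{\pi}{2}\right]$ and increasing on $\left[\frac{\pi}{2},\pi\right]$; because $g\ge0$, the same holds for $g$, and hence for $\gamma_t$. (One could instead differentiate, imitating the proof of Proposition \ref{prop1.1}, but then $r_t=\min\{t,1-t\}$ forces a split into the cases $t\le\frac12$ and $t\ge\frac12$, which the $g^2$-identity avoids.)

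For the two-sided bound I would evaluate at the endpoints. From $g(0)=g(\pi)=1$ we get $\gamma_t(0)=\gamma_t(\pi)=1$, and from $g\left(\frac{\pi}{2}\right)=|2t-1|$ together with $2r_t=1-|2t-1|$ (which one checks at once in both cases $t\le\frac12$ and $t\ge\frac12$) we get $\gamma_t\left(\frac{\pi}{2}\right)=0$. Combined with the monotonicity just proved, this shows $\gamma_t$ drops from $1$ to $0$ on $\left[0,\frac{\pi}{2}\right]$ and rises from $0$ to $1$ on $\left[\frac{\pi}{2},\pi\right]$, so $0\le\gamma_t(\theta)\le1$ everywhere; alternatively the bound is immediate from the chain $0\le\gamma_t(\theta)\le|\cos\theta|\le1$ already recorded in the Remark following Theorem \ref{7}. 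The degenerate values $t=0$ and $t=1$ give $r_t=0$, and there the formula must be read as a limit.

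There is no deep obstacle here; the only thing that needs care is spotting the right reduction — passing to $g^2$ linearizes the dependence on $\sin^2\theta$, which simultaneously makes the monotonicity transparent and removes any need for derivatives — together with recording the identity $2r_t=1-|2t-1|$ so that the endpoint value $\gamma_t\left(\frac{\pi}{2}\right)=0$ comes out cleanly.
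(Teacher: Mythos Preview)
Your argument is correct, but the route differs from the paper's. The paper simply differentiates and records the closed form
\[
\frac{d\gamma_t(\theta)}{d\theta}=-\frac{t(1-t)\sin 2\theta}{r_t\sqrt{\cos^2\theta+(2t-1)^2\sin^2\theta}},
\]
from which the sign pattern on $[0,\pi/2]$ and $[\pi/2,\pi]$ is immediate; it then reads off the endpoint values $1,0,1$ exactly as you do. Your reduction to $g^2(\theta)=1-4t(1-t)\sin^2\theta$ and the monotone-composition argument is a neat derivative-free alternative that makes the mechanism (affine function of $\sin^2\theta$) more transparent. One small correction to your parenthetical remark: differentiating does \emph{not} force a case split on $t$ --- the formula above holds verbatim for all $t\in(0,1)$ with $r_t$ left as is, so the paper's computation is no longer than yours. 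Both approaches need the observation $2r_t=1-|2t-1|$ (equivalently $\sqrt{(2t-1)^2}=1-2r_t$) to see $\gamma_t(\pi/2)=0$, and both must exclude $t\in\{0,1\}$ where $r_t=0$.
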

\begin{proof}
Elementary calculations show that
$$
\dfrac{d\gamma_t(\theta)}{d\theta}=-\frac{t(1-t)\sin 2\theta}{r_t\sqrt{\cos^2\theta +(2t-1)^2 \sin^2\theta}}.
$$
This shows the assertion of monotonicity.
Since we have $\lim\limits_{\theta\to n\pi/2}\gamma_t(\theta)=1$ for $n=0,2$ and
$\lim\limits_{\theta\to n\pi/2}\gamma_t(\theta)=\dfrac{2r_t-1+\sqrt{(2t-1)^2}}{2r_t}=0$ for $n=1$, we have  $0\le \gamma_t(\theta)\le 1$ by monotonicity of $\gamma_t(\theta).$
\end{proof}
Now we present an inequality that gives a relation between the geometric mean and the numerical radius of the operator $A$, noting that $\max_{t}\gamma_t(\theta)=\cos\theta.$ 
\begin{corollary}
Let $A\in \mathcal B\left( \mathcal H \right)$ have the polar decomposition $A=U\left| A \right|$, $0\le v\le 1$ and let $\theta_x={{\angle }_{{{\left| A \right|}^{v}}x,{{\left| A \right|}^{1-v}}{{U}^{*}}x}}$ where $x\in \mathcal H$ with $\left\| x \right\|=1$.\begin{itemize}
\item[(i)] If $0\le {{\theta }_{1}}<\theta_x <{{\theta }_{2}}\le \frac{\pi }{2}$, then 
\[\cos\left( {{\theta }_{2}} \right)\left\| {{\left| A \right|}^{2v}}\sharp{{\left| {{A}^{*}} \right|}^{2\left( 1-v \right)}} \right\|\le \omega \left( A \right).\]
\item[(ii)] If $\frac{\pi }{2}\le {{\theta }_{1}}<\theta_x <{{\theta }_{2}}\le \pi $, then
\[\cos\left( {{\theta }_{1}} \right)\left\| {{\left| A \right|}^{2v}}\sharp{{\left| {{A}^{*}} \right|}^{2\left( 1-v \right)}} \right\|\le \omega \left( A \right).\]
\end{itemize}
\end{corollary}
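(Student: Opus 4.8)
The plan is to combine the reverse Cauchy--Schwarz inequality \eqref{9} with the operator substitution used in the proof of Theorem~\ref{3} and with a Cauchy--Schwarz inequality for the operator geometric mean. Write $P=|A|^{2v}$ and $Q=|A^{*}|^{2(1-v)}=U|A|^{2(1-v)}U^{*}$, both positive, and fix a unit vector $x\in\mathcal H$ for which the angle $\theta_x=\angle_{|A|^{v}x,\,|A|^{1-v}U^{*}x}$ is defined (if one of these two vectors vanishes the asserted inequality is trivial, both sides being controlled by $\langle Px,x\rangle^{1/2}=0$). Applying \eqref{9} to the pair $|A|^{v}x$, $|A|^{1-v}U^{*}x$ and using, exactly as in Theorem~\ref{3}, that $\langle|A|^{v}x,|A|^{1-v}U^{*}x\rangle=\langle Ax,x\rangle$, $\||A|^{v}x\|^{2}=\langle Px,x\rangle$ and $\||A|^{1-v}U^{*}x\|^{2}=\langle Qx,x\rangle$, we obtain
\[
\gamma_t(\theta_x)\sqrt{\langle Px,x\rangle\langle Qx,x\rangle}\le\left|\langle Ax,x\rangle\right|\le\omega(A),\qquad 0\le t\le1 .
\]

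Next I would remove the dependence on $\theta_x$ using the monotonicity in Proposition~\ref{8}: in case (i), since $\theta_x<\theta_2\le\pi/2$ and $\gamma_t$ is decreasing on $[0,\pi/2]$, we have $\gamma_t(\theta_x)\ge\gamma_t(\theta_2)$; in case (ii), since $\theta_1<\theta_x$ and $\gamma_t$ is increasing on $[\pi/2,\pi]$, we have $\gamma_t(\theta_x)\ge\gamma_t(\theta_1)$. Writing $\theta_\ast$ for $\theta_2$ in case (i) and for $\theta_1$ in case (ii), this yields, for every $t\in[0,1]$ and every unit $x$,
\[
\gamma_t(\theta_\ast)\sqrt{\langle Px,x\rangle\langle Qx,x\rangle}\le\omega(A).
\]

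The remaining ingredient is the inequality $\langle(P\sharp Q)x,x\rangle\le\sqrt{\langle Px,x\rangle\langle Qx,x\rangle}$, valid for positive $P,Q$ and unit $x$; this is standard (see \cite{ando_hiai}) and can be extracted from the positivity of the block operator $\left(\begin{smallmatrix}P & P\sharp Q\\ P\sharp Q & Q\end{smallmatrix}\right)$ by evaluating it on the vectors $(x,sx)$, $s\in\mathbb R$, and reading off the resulting discriminant condition. Since $\gamma_t(\theta_\ast)\ge0$ by Proposition~\ref{8}, the previous display upgrades to $\gamma_t(\theta_\ast)\langle(P\sharp Q)x,x\rangle\le\omega(A)$ for all $t$ and all unit $x$. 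Because $P\sharp Q\ge0$ we have $\sup_{\|x\|=1}\langle(P\sharp Q)x,x\rangle=\|P\sharp Q\|$, so taking the supremum over unit $x$ gives $\gamma_t(\theta_\ast)\|P\sharp Q\|\le\omega(A)$ for every $t$; taking finally the supremum over $t\in[0,1]$ and invoking $\max_t\gamma_t(\theta_\ast)=\cos\theta_\ast$ (the identity noted just before the statement) produces $\cos\theta_\ast\,\|P\sharp Q\|\le\omega(A)$, which is precisely (i) and (ii).

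The only step that is not pure bookkeeping is the geometric-mean Cauchy--Schwarz inequality; everything else is a chain of substitutions and monotonicities already prepared in the paper. The points requiring a moment's care are the order of the two suprema — one must first pass to a bound free of $x$ and only then optimize in $t$ — the verification $\gamma_t(\theta_\ast)\ge0$ so that multiplying the operator-mean inequality through by it is legitimate, and the observation that in case (ii), where $\cos\theta_1\le0$, the stated inequality is actually implied by the stronger $\max_t\gamma_t(\theta_1)\,\|P\sharp Q\|\le\omega(A)$ and hence still holds.
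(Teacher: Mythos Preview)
Your proof is correct and follows essentially the same route as the paper's: apply the reverse Cauchy--Schwarz bound \eqref{9} to the pair $|A|^{v}x,\,|A|^{1-v}U^{*}x$, combine with the geometric-mean Cauchy--Schwarz inequality $\langle P\sharp Q\,x,x\rangle\le\sqrt{\langle Px,x\rangle\langle Qx,x\rangle}$, replace $\theta_x$ by $\theta_\ast$ via the monotonicity in Proposition~\ref{8}, and pass to the supremum over unit vectors. The only cosmetic difference is that the paper substitutes $\cos\theta_\ast$ from the outset (using $\max_t\gamma_t=\cos$ implicitly), whereas you carry $\gamma_t$ through and optimize in $t$ at the end; your additional remarks on the order of the two suprema and on the triviality of case~(ii) (since $\cos\theta_1\le 0$ there) are correct refinements that the paper does not spell out.
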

\begin{proof}
We prove case (i) since case (ii) can be proved similarly. In this case, by Proposition \ref{8}, we have ${{\gamma }_{t}}\left( {{\theta }_{2}} \right)\le {{\gamma }_{t}}\left( \theta  \right)$. Putting  $x={{\left| A \right|}^{1-t}}x$, $y={{\left| A \right|}^{1-v}}{{U}^{*}}x$, where $x\in \mathcal H$ is a unit vector, in the inequality \eqref{9}, we have
\[\begin{aligned}
  \cos\left( {{\theta }_{2}} \right)\left\langle {{\left| A \right|}^{2v}}\sharp{{\left| {{A}^{*}} \right|}^{2\left( 1-v \right)}}x,x \right\rangle& \le \cos\left( {{\theta }_{2}} \right)\sqrt{\left\langle {{\left| A \right|}^{2v}}x,x \right\rangle \left\langle {{\left| {{A}^{*}} \right|}^{2\left( 1-v \right)}}x,x \right\rangle } \\ 
 & =\cos\left( {{\theta }_{2}} \right)\sqrt{\left\langle {{\left| A \right|}^{2v}}x,x \right\rangle \left\langle U{{\left| A \right|}^{2\left( 1-v \right)}}{{U}^{*}}x,x \right\rangle } \\ 
 & =\cos\left( {{\theta }_{2}} \right)\left\| {{\left| A \right|}^{v}}x,x \right\|\left\| {{\left| A \right|}^{1-v}}{{U}^{*}}x,x \right\| \\ 
 & \le \left| \left\langle {{\left| A \right|}^{v}}x,{{\left| A \right|}^{1-v}}{{U}^{*}}x \right\rangle  \right| \\ 
 & =\left| \left\langle Ax,x \right\rangle  \right|.  
\end{aligned}\]
Notice that the first inequality follows from the following fact for positive operators $A,B$,
\[\left\langle A\sharp Bx,x \right\rangle \le \sqrt{\left\langle Ax,x \right\rangle \left\langle Bx,x \right\rangle }.\]
Thus,
\[\cos\left( {{\theta }_{2}} \right)\left\langle {{\left| A \right|}^{2v}}\sharp{{\left| {{A}^{*}} \right|}^{2\left( 1-v \right)}}x,x \right\rangle \le \left| \left\langle Ax,x \right\rangle  \right|,\]
and this implies
\[\cos\left( {{\theta }_{2}} \right)\left\| {{\left| A \right|}^{2v}}\sharp{{\left| {{A}^{*}} \right|}^{2\left( 1-v \right)}} \right\|\le \omega \left( A \right),\]
as desired.
\end{proof}

\vskip 0.5 true cm

{\tiny (S. Furuichi) Department of Information Science, College of Humanities and Sciences, Nihon University, Setagaya-ku, Tokyo, Japan}

{\tiny \textit{E-mail address:} zahraheydarbeygi525@gmail.com}

\vskip 0.3 true cm

{\tiny (M. Sababheh) Department of Basic Sciences, Princess Sumaya University for Technology, Amman, Jordan}
	
	{\tiny\textit{E-mail address:} sababheh@psut.edu.jo}

\vskip 0.3 true cm

{\tiny (H. R. Moradi) Department of Mathematics, Payame Noor University (PNU), P.O. Box, 19395-4697, Tehran, Iran
	
	\textit{E-mail address:} hrmoradi@mshdiau.ac.ir}
\end{document}